\numberwithin{equation}{section}
\newfont{\footsc}{cmcsc10 at 8truept}
\newfont{\footbf}{cmbx10 at 8truept}
\newfont{\footrm}{cmr10 at 10truept}
\makeatother \pagestyle{plain}
\newtheorem{corollary}{Corollary}
\renewcommand{\leq}{\leqslant}
\renewcommand{\geq}{\geqslant}
\newtheorem{theorem}{Theorem}
\numberwithin{theorem}{section}
\numberwithin{corollary}{section}
\title{Combinatorial proofs of some Bell number formulas}
\author{%
\large Mark Shattuck\\
\small Mathematics Department\\[-0.8ex]
\small University of Tennessee\\[-0.8ex]
\small Knoxville, TN 37996-1320\\[-0.8ex]
\small\texttt{shattuck@math.utk.edu} \and
 }
\date{}
\begin{document}

\maketitle
\begin{abstract}
In this note, we provide bijective proofs of some identities involving the Bell number, as previously requested.  Our arguments may be extended to yield a generalization in terms of complete Bell polynomials.  We also provide a further interpretation for a related difference of Catalan numbers in terms of the inclusion-exclusion principle.
\end{abstract}

\section{Introduction}

By a \emph{partition} of a set, we will mean a collection of pairwise disjoint subsets, called \emph{blocks}, whose union is the set.  Let $\mathcal{P}_n$ denote the set of all partitions of $[n]=\{1,2,\ldots,n\}$.  Recall that the cardinality of $\mathcal{P}_n$ is given by the $n$-th Bell number $b_n$; see A000110 in \cite{Sl}.  In what follows, if $m$ and $n$ are positive integers, then let $[m,n]=\{m,m+1,\ldots,n\}$ if $m \leq n$, with $[m,n]=\varnothing$ if $m>n$.  Throughout, the binomial coefficient is given by $\binom{n}{k}=\frac{n!}{k!(n-k)!}$ if $0 \leq k \leq n$, with $\binom{n}{k}$ taken to be zero otherwise.

In \cite{QK}, combinatorial proofs were sought for some identities involving Bell numbers and binomial coefficients.  Relation \eqref{t1e1} below appears in a slightly different though equivalent form as Theorem 4.4 in \cite{QK}, where an algebraic proof was given, and identities \eqref{c1e1}, \eqref{c1e2}, and \eqref{c1e3} appear as Corollary 4.5.  It is the purpose of this note to provide the requested bijective proofs of \eqref{t1e1}--\eqref{c1e3}.  Our proof may be extended to obtain a more general relation involving Bell polynomials.
In the final section, we provide a further interpretation for the related alternating sum $\sum_{i=0}^n(-1)^{n-i}\binom{n}{i}c_i$, where $c_i$ denotes the $i$-th Catalan number.  In particular, we show that it can be thought of in terms of the inclusion-exclusion principle, as requested in \cite{QK}.

\section{Bijective proof of identities}

We first provide a  bijective proof of a formula relating an alternating sum to a positive one in the spirit of \cite{BQ}. (See also \cite[p. 59]{A}.)

\begin{theorem}\label{t1}
If $0 \leq j \leq n$, then
\begin{equation}\label{t1e1}
\sum_{i=0}^j (-1)^i\binom{j}{i}b_{n+1-i}=\sum_{k=0}^{n-j}\binom{n-j}{k}b_{n-k}.
\end{equation}
\end{theorem}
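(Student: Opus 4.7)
The plan is to show that both sides of \eqref{t1e1} enumerate the set $\mathcal{Q}_{j,n}$ of partitions $\pi\in\mathcal{P}_{n+1}$ in which the block containing the element $n+1$ meets $[j]$ trivially. The right-hand side admits this interpretation directly: conditioning on the set $T\subseteq[j+1,n]$ of $k$ elements that join $n+1$ in its block, we have $\binom{n-j}{k}$ choices for $T$ and $b_{n-k}$ partitions of the remaining $n-k$ elements of $[n+1]\setminus(T\cup\{n+1\})$. Summing over $k$ yields $\sum_{k=0}^{n-j}\binom{n-j}{k}b_{n-k}=|\mathcal{Q}_{j,n}|$.

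For the left-hand side, I would interpret $(-1)^i\binom{j}{i}b_{n+1-i}$ as a signed count over pairs $(S,\pi)$, where $S\subseteq[j]$ has size $i$ and $\pi\in\mathcal{P}_{n+1}$ is any partition in which every element of $S\cup\{n+1\}$ lies in a common block $B=B(\pi)$. (Equivalently, $\pi$ is obtained from a partition of $[n+1]\setminus S$ by inserting the elements of $S$ into the block that contains $n+1$.) Let $\mathcal{A}$ denote the set of all such pairs, weighted by $(-1)^{|S|}$. To evaluate the signed sum, I would construct a sign-reversing involution $\phi$ on $\mathcal{A}$. Given $(S,\pi)\in\mathcal{A}$, set $T=B(\pi)\cap[j]$, so that $S\subseteq T$ by construction. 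If $T\neq\varnothing$, let $t_0=\min T$ and define $\phi(S,\pi)=(S\triangle\{t_0\},\pi)$; the partition $\pi$, hence $T$, is unchanged, so $\phi$ is an involution and clearly reverses the sign. The fixed points are exactly the pairs with $T=\varnothing$ (forcing $S=\varnothing$), and these are in natural bijection with $\mathcal{Q}_{j,n}$, contributing $+1$ each. Thus the signed sum equals $|\mathcal{Q}_{j,n}|$, matching the right-hand side.

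The key observation underlying this plan is the inclusion--exclusion reading of the alternating sum: with $A_i$ denoting the set of $\pi\in\mathcal{P}_{n+1}$ in which $i$ and $n+1$ lie in the same block, one has $|A_{i_1}\cap\cdots\cap A_{i_r}|=b_{n+1-r}$ (by merging $\{i_1,\ldots,i_r,n+1\}$ into a single element), so that $|\mathcal{Q}_{j,n}|=\sum_{i=0}^{j}(-1)^i\binom{j}{i}b_{n+1-i}$. The involution above is simply the bijective realization of this inclusion--exclusion computation. The only point that needs care is verifying that $\phi$ respects the defining condition $S\subseteq T$ after toggling, but this is immediate since $t_0\in T$ and $T$ is unchanged by $\phi$; no genuine obstacle arises.
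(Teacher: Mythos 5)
Your proof is correct, but it rests on a different combinatorial model than the paper's. Both arguments cast the left side as a signed sum over pairs $(S,\pi)$ with $S\subseteq[j]$ and kill the signed terms with an involution that toggles a single element, yet the interpretation of $b_{n+1-i}$ differs: you keep $\pi$ a partition of all of $[n+1]$ and force $S$ into the block of $n+1$ (the literal inclusion--exclusion over the events ``$i$ shares a block with $n+1$''), whereas the paper takes $\pi$ to be a partition of $[n+1]\setminus S$ and toggles the largest element of $[j]$ that is either in $S$ or a singleton of $\pi$. Consequently the survivor sets differ: yours are the partitions of $[n+1]$ whose block containing $n+1$ avoids $[j]$, and these decompose \emph{immediately} as the right-hand side, so you need no further work; the paper's survivors are the partitions of $[n+1]$ with no singleton blocks in $[j]$, and an extra bijection (sweeping the $[j]$-singletons into the block of $n+1$) is required to reach the right-hand side. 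What the paper's extra step buys is the ``no singletons in $[j]$'' interpretation of the alternating sum, which is exactly what powers the corollaries \eqref{c1e1}--\eqref{c1e3} and the weighted generalization \eqref{t2e1}; your route is shorter and more self-contained for Theorem \ref{t1} itself, but would need to be supplemented (e.g.\ by composing the two survivor bijections) to recover that interpretation. All steps you give check out: $\lvert A_{i_1}\cap\cdots\cap A_{i_r}\rvert=b_{n+1-r}$ by contracting $\{i_1,\ldots,i_r,n+1\}$ to a point, the toggle at $t_0=\min\bigl(B(\pi)\cap[j]\bigr)$ preserves the constraint $S\subseteq B(\pi)$ and reverses sign, and the fixed points are exactly the pairs $(\varnothing,\pi)$ with $B(\pi)\cap[j]=\varnothing$.
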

\begin{proof}
Given $0 \leq j \leq n$, let $\mathcal{A}=\mathcal{A}_{n,j}$ denote the set of ordered pairs $\lambda=(S,\pi)$, where $S \subseteq [j]$ and $\pi$ is a partition of $[n+1]-S$.  Define the sign of $\lambda$ to be $(-1)^{|S|}$. Note that the left-hand side of \eqref{t1e1} gives the total weight of all the members of $\mathcal{A}$.  We will define a sign-changing involution of $\mathcal{A}$ whose set of survivors has weight given by the right-hand side of \eqref{t1e1}.  To do so, let $\ell_0$ denote the largest $\ell \in [j]$, if it exists, such that either $\ell \in S$ or $\ell$ occurs as a singleton block within $\pi$.  Let $\lambda'$ denote the member of $\mathcal{A}$ obtained by either removing $\ell_0$ from $S$ and adding it to $\pi$ as the singleton block $\{\ell_0\}$ if $\ell_0 \in S$ or, vice-versa, if $\ell_0$ occurs within $\pi$ as a singleton.  Observe that the mapping $\lambda \mapsto \lambda'$ defines a sign-changing involution of $\mathcal{A}-\mathcal{A}^*$, where $\mathcal{A}^*$ denotes the subset of $\mathcal{A}$ consisting of all $\lambda$ of the form $\lambda=(\varnothing,\pi)$ in which $\pi$ is a member of $\mathcal{P}_{n+1}$ having no singleton blocks in $[j]$.  Note that each member of $\mathcal{A}^*$ has positive sign.

For example, suppose $n=8$, $j=4$, and $\lambda=(S,\pi) \in \mathcal{A}_{8,4}$, with $S=\{1,3\}$ and $\pi=\{2\},\{4,5\},\{6,8,9\},\{7\}$.  Then $\ell_0=3$ and $\lambda$ would be paired with $\lambda'=(S',\pi')$, where $S'=\{1\}$ and $\pi'=\{2\},\{3\},\{4,5\},\{6,8,9\},\{7\}$.

To complete the proof, we need to show that the number of partitions $\pi$ of $[n+1]$ having no singleton blocks among the elements of $[j]$ is given by $\sum_{k=0}^{n-j}\binom{n-j}{k}b_{n-k}$.  To count such $\pi$, consider the number $k$ of elements of $[j+1,n]$ belonging to the block containing $n+1$, where $0 \leq k \leq n-j$.  Having selected $k$ elements in $\binom{n-j}{k}$ ways to go in the block containing $n+1$, we then form a partition $\rho$ of the remaining $n-k$ elements of $[n]$ in $b_{n-k}$ ways.  Finally, we remove any singletons of $\rho$ belonging to $[j]$ and insert them as elements into the block containing $n+1$.  This yields a partition of $[n+1]$ containing no singletons in $[j]$.  Conversely, given any such member of $ \mathcal{P}_{n+1}$, one can remove all of the elements less than or equal $j$ in the block containing $n+1$ and add them back as singleton blocks.  Disregarding the block that now contains $n+1$ and possibly some elements of $[j+1,n]$ yields an arbitrary partition of size $n-k$, which completes the proof.
\end{proof}

We can also provide a bijective proof of the following corollary.

\begin{corollary}\label{c1}
If $j \geq0$, then
\begin{equation}\label{c1e1}
\sum_{i=0}^j (-1)^i \binom{j}{i}b_{j+1-i}=b_j
\end{equation}
and
\begin{equation}\label{c1e2}
\sum_{i=0}^j (-1)^i \binom{j}{i}b_{j+2-i}=b_j+b_{j+1}.
\end{equation}
If $j \geq 2$, then
\begin{equation}\label{c1e3}
\sum_{i=0}^j (-1)^i \binom{j}{i}b_{j-i}=\sum_{k=0}^{j-2}(-1)^k b_{j-1-k}.
\end{equation}
\end{corollary}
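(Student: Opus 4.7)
My plan for Corollary \ref{c1} is to read \eqref{c1e1} and \eqref{c1e2} as direct specializations of Theorem \ref{t1} and to handle \eqref{c1e3} by running a parallel involution on $[j]$ rather than on $[n+1]$. For \eqref{c1e1}, setting $n=j$ in \eqref{t1e1} collapses the right-hand side to $\binom{0}{0}b_j=b_j$. For \eqref{c1e2}, taking $n=j+1$ gives right-hand side $\binom{1}{0}b_{j+1}+\binom{1}{1}b_j=b_{j+1}+b_j$. Both are therefore immediate consequences of Theorem \ref{t1} with no further combinatorial work required.

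Identity \eqref{c1e3} falls outside the scope of Theorem \ref{t1}, since matching exponents would force $n=j-1$, violating the hypothesis $j\leq n$. My plan is instead to mimic the proof of Theorem \ref{t1}: let $\mathcal{B}_j$ be the set of pairs $(S,\pi)$ with $S\subseteq[j]$ and $\pi$ a partition of $[j]-S$, weighted by $(-1)^{|S|}$, so that the total weight of $\mathcal{B}_j$ equals the left-hand side of \eqref{c1e3}. The same toggle---find the largest $\ell\in[j]$ that is either in $S$ or occurs as a singleton of $\pi$, and flip its status---is a sign-reversing involution whose survivors are the pairs $(\varnothing,\pi)$ with $\pi$ a partition of $[j]$ having no singleton blocks at all. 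Writing $a_j$ for the count of such partitions, the left-hand side of \eqref{c1e3} equals $a_j$.

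What remains is to prove the recursion $a_j+a_{j-1}=b_{j-1}$ for $j\geq 1$, since iterating $a_j=b_{j-1}-a_{j-1}$ down to $a_1=0$ telescopes to $a_j=\sum_{k=0}^{j-2}(-1)^k b_{j-1-k}$. I would establish it via the following bijection from $\mathcal{P}_{j-1}$ to singleton-free partitions of $[j-1]$ together with singleton-free partitions of $[j]$. Given $\sigma\in\mathcal{P}_{j-1}$: if $\sigma$ has no singletons, send it to itself; otherwise, merge all singleton elements of $\sigma$ together with $j$ into a single new block, yielding a singleton-free partition of $[j]$. The inverse, on a singleton-free partition of $[j]$, removes $j$ from its block and breaks the remaining members of that block into singletons; on a singleton-free partition of $[j-1]$, it is the identity. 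The only nonroutine step in the whole argument is this final bijection delivering $a_j+a_{j-1}=b_{j-1}$; everything else is either a direct specialization of Theorem \ref{t1} or a cosmetic variant of its proof.
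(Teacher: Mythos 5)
Your proof is correct and follows essentially the same route as the paper: \eqref{c1e1} and \eqref{c1e2} are precisely the $n=j$ and $n=j+1$ specializations of Theorem~\ref{t1}, your involution on pairs $(S,\pi)$ over the ground set $[j]$ reproduces the paper's identification of the left side of \eqref{c1e3} with the number of singleton-free partitions of $[j]$, and your recursion $a_j+a_{j-1}=b_{j-1}$ together with its merge-the-singletons-with-$j$ bijection is exactly the paper's identity $b_{j-1-k}=|\mathcal{C}_{j-1-k}|+|\mathcal{D}_{j-1-k}|$ (with $\mathcal{D}_{j-1-k}=\mathcal{C}_{j-2-k}$) in different notation, followed by the same telescoping. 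The only cosmetic difference is that the paper obtains the recursion by citing \eqref{c1e1} rather than by exhibiting the bijection anew.
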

\begin{proof}
By the preceding argument, the left-hand side of \eqref{c1e1} counts the members of $\mathcal{P}_{j+1}$ in which no element of $[j]$ occurs as a singleton.  Equivalently, one may remove any singleton blocks from a member of $\mathcal{P}_j$ and form a new block with them together with the element $j+1$.  Similarly, the left-hand side of \eqref{c1e2} counts the members of $\mathcal{P}_{j+2}$ in which no element of $[j]$ occurs as a singleton.  On the other hand, one may remove any elements occurring as singletons within a member of $\mathcal{P}_j$ and form a new block with them together with the elements $j+1$ and $j+2$ or remove any singletons in $[j]$ from a member of $\mathcal{P}_{j+1}$ and insert them into a block with $j+2$.

To show \eqref{c1e3}, given $0 \leq k \leq j-2$, let $\mathcal{C}_{j-1-k}$ denote the subset of $\mathcal{P}_j$ whose members contain no singletons amongst $[j-k]$, with each element of $[j+1-k,j]$ occurring as a singleton, and let $\mathcal{D}_{j-1-k}$ denote the subset of $\mathcal{P}_j$ whose members contain no singletons amongst $[j-1-k]$, with each element of $[j-k,j]$ occurring as a singleton.  By the definitions, we have $\mathcal{D}_1=\varnothing$, $\mathcal{D}_{j-1-k}=\mathcal{C}_{j-2-k}$ if $0 \leq k \leq j-3$, and
$$b_{j-1-k}=|\mathcal{C}_{j-1-k}|+|\mathcal{D}_{j-1-k}|,$$
by \eqref{c1e1}, since $\mathcal{C}_{j-1-k}\cup\mathcal{D}_{j-1-k}$ is the same as the set of all members of $\mathcal{P}_{j-k}$ in which there are no singletons in $[j-1-k]$.  Thus, we have
\begin{align*}
\sum_{k=0}^{j-2}(-1)^kb_{j-1-k}&=\sum_{k=0}^{j-2}(-1)^k(|\mathcal{C}_{j-1-k}|+|\mathcal{D}_{j-1-k}|)\\
&=|\mathcal{C}_{j-1}|+(-1)^{j-2}|\mathcal{D}_1|=|\mathcal{C}_{j-1}|.
\end{align*}
On the other hand, we have $|\mathcal{C}_{j-1}|=\sum_{i=0}^{j}(-1)^i\binom{j}{i}b_{j-i}$, by the proof of \eqref{t1e1} above, since $\mathcal{C}_{j-1}$ is the set of all partitions of $[j]$ having no singleton blocks.  This completes the proof.

\end{proof}

\section{A generalization}

Let $B_n=B_n(t_1,t_2,\ldots,t_n)$ denote the $n$-th complete Bell polynomial; see, for example, \cite{Co}.  Recall that $B_n$ is obtained by assigning the weight $t_i$ to each block of a partition $\pi \in \mathcal{P}_n$ of size $i$, defining the weight of $\pi$ to be the product of the weights of its blocks, and then summing over all $\pi$.  It is well-known (see \cite{Co}) that $B_n$ is given by $\sum_{r=0}^nB_{n,r}(t_1,t_2,\ldots,t_n)$, where
$$B_{n,r}(t_1,t_2,\ldots,t_n)=\sum_{s}\frac{n!}{r_1!r_2!\cdots r_n!}\left(\frac{t_1}{1!}\right)^{r_1}\left(\frac{t_2}{2!}\right)^{r_2}\cdots\left(\frac{t_n}{n!}\right)^{r_n}$$
and the sum is taken over all nonnegative integer solutions $s=(r_1,r_2,\ldots,r_n)$ of $x_1+x_2+\cdots+x_n=r$ and $x_1+2x_2+\cdots +nx_n=n$. Note that $B_n=b_n$ when all weights are unity and $B_n=n!$ when $t_i=(i-1)!$ for all $i$.  Other sequences emerge from other specializations of the $t_i$; for example, $B_n=L_n$, the $n$-th Lah number, when $t_i=i!$ for all $i$ and $B_n=d_n$, the $n$-th derangement number, when $t_1=0$ and $t_i=(i-1)!$ for $i \geq 2$ (see A000262 and A000166, respectively,  in \cite{Sl}).

The following result generalizes \eqref{t1e1}, reducing to it when $t_i=1$ for all $i$.

\begin{theorem}\label{t2}
If $0 \leq j \leq n$, then
\begin{align}
\sum_{i=0}^j &(-1)^it_1^i\binom{j}{i}B_{n+1-i}\notag\\
&=\sum_{k=0}^{n-j}\sum_{\ell=0}^j \sum_{r=0}^{j-\ell} (-1)^rt_1^rt_{k+\ell+1}\binom{n-j}{k}\binom{j}{\ell}\binom{j-\ell}{r}B_{n-k-\ell-r}\label{t2e1}.
\end{align}
\end{theorem}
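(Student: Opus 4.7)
The plan is to adapt the weighted version of the sign-reversing involution from the proof of Theorem \ref{t1}, treating each partition $\pi$ as having weight $w(\pi)=\prod_{B\in\pi}t_{|B|}$ and each ``marked'' element (belonging to an auxiliary set $S$) as contributing the same weight $t_1$ one would assign to a singleton block.

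First I would set up the extended weighted structure: let $\mathcal{A}=\mathcal{A}_{n,j}$ be the same collection of pairs $\lambda=(S,\pi)$ with $S\subseteq[j]$ and $\pi$ a partition of $[n+1]-S$, but now with weight $w(\lambda)=(-t_1)^{|S|}w(\pi)$. Grouping by $i=|S|$ and using the fact that summing $w(\pi)$ over all partitions of an $(n+1-i)$-set yields $B_{n+1-i}$, the total weight of $\mathcal{A}$ equals the left-hand side of \eqref{t2e1}. Then I would apply the same pairing $\lambda\mapsto\lambda'$ as before, swapping the role of the largest $\ell_0\in[j]$ that is either in $S$ or appears as a singleton of $\pi$. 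The key observation is that moving $\ell_0$ between $S$ (contributing $-t_1$) and a singleton block $\{\ell_0\}$ of $\pi$ (contributing $t_1$) preserves $|w(\lambda)|$ while flipping its sign, so this is a weight-reversing involution on $\mathcal{A}-\mathcal{A}^*$ with survivor set $\mathcal{A}^*=\{(\varnothing,\pi):\pi\in\mathcal{P}_{n+1}\text{ has no singleton in }[j]\}$.

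Next I would evaluate the survivor sum $\sum_{\lambda\in\mathcal{A}^*}w(\lambda)$ by conditioning on the block $B$ of $\pi$ that contains $n+1$. Writing $k$ for the number of elements of $[j+1,n]$ lying in $B$ and $\ell$ for the number of elements of $[j]$ lying in $B$, the block $B$ has size $k+\ell+1$ and hence contributes $t_{k+\ell+1}$; choosing its elements contributes a factor of $\binom{n-j}{k}\binom{j}{\ell}$. The remaining $n-k-\ell$ elements of $[n+1]-B$ contain exactly $j-\ell$ elements of $[j]$, and they must be partitioned in such a way that none of these $j-\ell$ elements forms a singleton. Re-applying the very same weighted involution (with $j-\ell$ playing the role of $j$ and the partitioned ground set having size $n-k-\ell$) shows that the weighted count of such partitions equals
\[
\sum_{r=0}^{j-\ell}(-1)^r t_1^r\binom{j-\ell}{r}B_{n-k-\ell-r}.
\]
Multiplying together and summing over $k$ and $\ell$ then gives exactly the right-hand side of \eqref{t2e1}.

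The only step that requires care is the second application of the involution, since one must verify that after fixing the content of the block containing $n+1$, the induced counting problem is genuinely of the same form as the one resolved in the first step (with the $j-\ell$ ``forbidden singleton'' positions playing the role of $[j]$). Once this reduction is made explicit, the identity follows by comparing total weights. As a sanity check, specializing $t_i=1$ collapses the inner alternating sum to $B_{n-k}$ when $\ell=0$ and to $0$ otherwise, recovering Theorem \ref{t1}.
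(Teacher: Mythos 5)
Your proposal is correct and follows essentially the same route as the paper: the same signed, $t_1$-weighted involution on pairs $(S,\pi)$, followed by conditioning on the block containing $n+1$ (choosing $k$ elements of $[j+1,n]$ and $\ell$ elements of $[j]$ for it, giving the $t_{k+\ell+1}$ factor) and reapplying the argument to the remaining $j-\ell$ forbidden-singleton positions. Only your closing sanity check is slightly off: with $t_i=1$ the reduction to Theorem \ref{t1} comes from collapsing the double sum over $\ell$ and $r$ via $\sum_{\ell+r=m}(-1)^r\binom{j}{\ell}\binom{j-\ell}{r}=\binom{j}{m}(1-1)^m$, i.e., over the combined index $m=\ell+r$, rather than the inner sum vanishing for each fixed $\ell>0$.
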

\begin{proof}
We extend the reasoning used to show \eqref{t1e1} above.  To the ordered pair $(S,\pi)$, we now assign the weight $$(-1)^{|S|}t_1^{|S|+s_1(\pi)}t_2^{s_2(\pi)}t_3^{s_3(\pi)}\cdots,$$ where $s_i(\pi)$ denotes the number of blocks of $\pi$ of size $i$.
Then the left-hand side of \eqref{t2e1} gives the total weight of all possible ordered pairs and the involution defined in the first paragraph of the proof of \eqref{t1e1} above is seen to reverse the weight.

To complete the proof, we argue that the total weight of all members of $\mathcal{P}_{n+1}$ having no singletons in $[j]$ is given by the right-hand side of \eqref{t2e1}.  In addition to choosing $k$ elements of $[j+1,n]$ to go in the same block as the element $n+1$ as before, we must now also select $\ell$ members of $[j]$ to go in this block.  Note that this creates a block of size $k+\ell+1$, whence the $t_{k+\ell+1}$ factor.  Once this is done, the remaining $n-k-\ell$ members of $[n]$ must be partitioned into sets such that none of the remaining $j-\ell$ members of $[j]$ occur as singletons.  Reasoning as before, this may be achieved in $\sum_{r=0}^{j-\ell}(-1)^rt_1^r\binom{j-\ell}{r}B_{n-k-\ell-r}$ ways, which completes the proof.
\end{proof}

Taking various specializations of the $t_i$ yields analogues of \eqref{t1e1} for other sequences such as $n!$, $L_n$, or $d_n$.

\section{An interpretation for a Catalan number difference}

We address here a related question which concerns a sum comparable to the left-hand side of \eqref{t1e1} above for the Catalan number $c_n=\frac{1}{n+1}\binom{2n}{n}$.  Let
$$K_n=\sum_{i=0}^n(-1)^{n-i}\binom{n}{i}c_i, \qquad n \geq 0.$$
In \cite{QK}, it was shown by induction and generating functions that $K_n$ enumerates a subset of a structure whose members the authors term \emph{non-interlocking, non-skipping $n$-columns} (denoted by NINS).  The problem of finding a direct argument for this using inclusion-exclusion was mentioned.

In fact, a clearer combinatorial interpretation for the numbers $K_n$ may be given.
Suppose $\Pi=B_1/B_2/\cdots \in \mathcal{P}_n$ has its blocks arranged in increasing order of smallest elements.  Recall that $\pi$ may be represented, canonically, as a \emph{restricted growth sequence} $\pi=\pi_1\pi_2\cdots\pi_n$, wherein $i \in B_{\pi_i}$ for each $i$.  See, e.g., \cite{Mi}.  For instance, the partition $\Pi=126/359/4/78$ would be represented as $\pi=112321442$.  A partition is said to be \emph{non-crossing} (see \cite{Kl}) if its canonical form contains no subsequences of the form $abab$ where $a<b$ (i.e., if it avoids all occurrences of the pattern $1212$).  By the definitions from \cite{QK}, an NINS $n$-column is seen to be equivalent to a non-crossing partition of the same length since, when an $n$-column is non-skipping, it can be shown that the interlocking property is equivalent to the non-crossing property.  Thus, Theorem 3.2 in \cite{QK} is equivalent to the following combinatorial interpretation of $K_n$:
\begin{align*}
&\qquad\text{The number of non-crossing partitions of length $n$ contain-}\\
&\qquad\text{ing no two equal adjacent letters (where the first and last}\\
&\qquad\text{letters are considered adjacent) is given by $K_n$.}
\end{align*}

In \cite{QK}, it was requested to provide a direct combinatorial proof of Theorem 3.2.  Using our interpretation for $K_n$ in terms of non-crossing partitions, one may provide such a proof as follows.   Note that within a  non-crossing partition represented sequentially, any letter whose successor is the same (as well as a possible 1 at the end) is extraneous concerning the containment or avoidance of the pattern $1212$.  Thus, one may cover any positions containing letters whose successor is the same (as well as a possible $1$ at the end) and consider the avoidance problem on the remaining uncovered positions.  Suppose one covers exactly $n-i$ positions.  Then the remaining letters constitute a partition of size $i$ avoiding the pattern $1212$, and it is well-known that such partitions are enumerated by $c_{i}$ (see \cite{Kl}).  When one uncovers the repetitive letters, no occurrences of $1212$ are introduced.  That $K_n$ counts all non-crossing partitions of length $n$ having no two adjacent letters the same now follows from an application of the inclusion-exclusion principle.  In fact, it is further seen that the sum $\sum_{i=0}^j(-1)^i\binom{j}{i}c_{n-i}$ counts all non-crossing partitions of length $n$ in which no letter equals its successor among the first $j$ positions in analogy to Theorem 2.3 in \cite{QK}.


\pagebreak
\bigskip
\hrule
\bigskip

\noindent 2010 {\em Mathematics Subject Classification:} Primary
05A19; Secondary 05A18.

\noindent {\em Keywords: Bell numbers, Bell polynomials, Catalan numbers, combinatorial proof.}

\bigskip
\hrule
\bigskip
\end{document}